\numberwithin{equation}{section}
\numberwithin{table}{section}
\theoremstyle{plain}
\newtheorem{theorem}{Theorem}[section]
\theoremstyle{remark}
\newcommand{\BB}{{\mathcal{B}}}
\newcommand{\VV}{{\mathcal{V}}}
\newcommand{\TT}{\mathscr{T}}
\newcommand{\RRR}{{\mathbb{R}}}
\title[Integer Invariants]{Integer Invariants of an Incidence Matrix Related to Rota's Basis Conjecture}
\author[Bittner]{Stephanie Bittner}
\author[Ducey]{Joshua Ducey} 
\author[Guo]{Xuyi Guo}
\author[Oh]{Minah Oh}
\author[Zweber]{Adam Zweber}
\address{Dept.\ of Mathematics and Computer Science, Virginia Wesleyan College, Norfolk, VA 23502.} 
\email{snbittner@vwc.edu}
\address{Dept.\ of Mathematics and Statistics, James Madison University, Harrisonburg, VA 22807}
\email{duceyje@jmu.edu}
\address{Dept.\ of Mathematics, Stanford University, Stanford, CA 94305}
\email{xuyguo@stanford.edu}
\address{Dept.\ of Mathematics and Statistics, James Madison University, Harrisonburg, VA 22807}
\email{ohmx@jmu.edu}
\address{Mathematics Dept., Carleton College, Northfield, MN 55057}
\email{zwebera@carleton.edu}
\keywords{Rota's basis conjecture, incidence matrix, transversals, eigenvalues, invariant factors, Smith normal form}
\begin{document}
\begin{abstract}
We compute the spectrum and Smith normal form of the incidence matrix of disjoint transversals, a combinatorial object closely related to the $n$-dimensional case of Rota's basis conjecture.
\end{abstract}
\maketitle
\section{Introduction and Definitions}

Let $n$ be a positive integer, and consider a square array of $n^2$ distinct elements:
\begin{equation} \label{bases}
 \begin{aligned}
   a_1 && a_2 && \cdot\cdot\cdot && a_n \\
   b_1 && b_2 && \cdot\cdot\cdot && b_n \\
   \vdots && \vdots   && \vdots && \vdots \\         
   c_1 && c_2 && \cdot\cdot\cdot && c_n.
  \end{aligned}
\end{equation}
A transversal of the array $(\ref{bases})$ is an $n$-element set consisting of exactly one element from each row. Two transversals are said to be disjoint if and only if they are disjoint as sets. We write $\TT_n$ to denote the set of all transversals, ordered in some fashion.

These transversals together with the relation of disjointness will be the object of our study.  We can encode this information into an incidence matrix (more accurately, an adjacency matrix) as follows.
Define the \textit{incidence matrix of disjoint transversals} $A_n$ to be the $n^n\times n^n$ matrix whose rows and columns are indexed by $\TT_n$ such that
the $(i,j)$-entry of $A_n$ is one if the $i$-th transversal and the $j$-th transversal are disjoint and zero otherwise. 

The spectrum of $A_n$ does not depend on how the set of transversals was ordered, and so we may view it as an invariant of the incidence relation.  More fundamental is the \textit{Smith normal form} of $A_n$, which is unchanged even under independent row and column permutations of $A_n$ and hence describes the incidence relation at a more basic level. 

Recall that the Smith normal form of a (possibly nonsquare) integer matrix is a diagonal matrix of the same size, with the diagonal entries subject to certain divisibility conditions.  More formally, if $A$ is an $m \times n$ integer matrix, then there exist unimodular (invertible over the integers) matrices $P$ and $Q$ such that the matrix $PAQ = (d_{i,j})$ satisfies
\[
d_{i,j} = 0, \hbox{ for $i \neq j$}
\]
and
\[
d_{i,i} \hbox{ divides } d_{i+1,i+1}, \hbox{ for $1 \le i < \min\{m,n\}$}.
\]
This diagonal matrix is called the Smith normal form of $A$.  These diagonal entries $d_{i,i}$ are unique up to sign, and are called the invariant factors of the matrix $A$.

In the next section, we will compute the spectrum and the Smith normal form of $A_n$, the incidence matrix of disjoint transversals.  The matrices $A_n$ are actually the association matrices for the maximal distance in the Hamming association schemes $H(n,n)$, and their eigenvalues are known \cite{D:1973}.  We give another computation of the spectrum because it is relevant to how we will calculate the Smith normal form of $A_n$.  Our main result was conjectured in \cite{report:rbc} and first established in \cite{sin:snf} using character-theoretic methods.  Here we give an elementary proof.  In the final section, we explain the connection between the matrix $A_n$ and Rota's basis conjecture.

\section{The Spectrum and Smith Normal Form of $A_{n}$}

\begin{theorem}
Let $A_n$ be the incidence matrix of disjoint transversals. 
\begin{enumerate}
 \item \label{item1}
  The eigenvalues of $A_{n}$ are
\[
(-1)^{n-k} (n-1)^{k}
\]
occurring with multiplicity 
\[
\binom{n}{k} (n-1)^{n-k}
\]
for $0 \le k \le n$.

\item \label{item2}
The invariant factors of $A_{n}$ are
\[
(n-1)^{k}
\]
occurring with multiplicity
\[
\binom{n}{k} (n-1)^{n-k}
\]
for $0 \le k \le n$.
\end{enumerate}
\end{theorem}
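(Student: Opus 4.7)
The plan is to exploit the fact that the disjointness relation factors coordinate-by-coordinate. Identifying each transversal with a function $f \colon [n] \to [n]$ specifying which column is chosen from each row, the $n^n$ transversals correspond to such functions, and two transversals $f, g$ are disjoint iff $f(i) \ne g(i)$ for every $i$. The incidence matrix therefore decomposes as a Kronecker product
\[
A_n = (J - I)^{\otimes n},
\]
where $J$ is the $n \times n$ all-ones matrix and $I$ is the identity. It then suffices to understand the single factor $J - I$: its eigenvalues are $n - 1$ (with eigenvector $\mathbf{1}$) and $-1$ (with multiplicity $n - 1$, on the hyperplane of vectors summing to zero), and a short cokernel calculation shows its Smith normal form is $\operatorname{diag}(1, \dots, 1, n - 1)$. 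Indeed, each column of $J - I$ has the form $\mathbf{1} - e_j$, so in $\ZZZ^n / (J - I) \ZZZ^n$ every $e_j$ collapses to $\mathbf{1}$; one column then gives $(n-1)\mathbf{1} = 0$, and comparing with $\lvert\det(J - I)\rvert = n - 1$ pins the cokernel down as $\ZZZ/(n-1)\ZZZ$.

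Part (1) is then immediate from the multiplicativity of eigenvalues over Kronecker products: an eigenvalue $(-1)^{n-k}(n-1)^k$ of $A_n$ arises by selecting $k$ of the $n$ tensor factors to contribute the eigenvalue $n - 1$ (each with $1$-dimensional eigenspace) and the remaining $n - k$ factors to contribute $-1$ (each with $(n-1)$-dimensional eigenspace), giving total multiplicity $\binom{n}{k}(n-1)^{n-k}$.

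For Part (2), write $J - I = P D Q$ with $D = \operatorname{diag}(1, \dots, 1, n - 1)$ and $P, Q$ unimodular. Then $A_n = P^{\otimes n} \, D^{\otimes n} \, Q^{\otimes n}$, and since $P^{\otimes n}$ and $Q^{\otimes n}$ are again unimodular, $A_n$ is unimodularly equivalent to the diagonal matrix $D^{\otimes n}$. Its diagonal entries are products of one entry from each copy of $D$, so they are powers of $n - 1$, with $(n-1)^k$ occurring exactly $\binom{n}{k}(n - 1)^{n-k}$ times. Because powers of a fixed integer automatically form a divisibility chain, sorting these entries into nondecreasing order (via a unimodular permutation) already produces Smith normal form. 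The one subtlety — and the place requiring care — is this final step: in general the Smith normal form of a tensor product is \emph{not} the tensor product of Smith normal forms, and the argument works here only because every nontrivial invariant factor of $J - I$ is a power of a single integer.
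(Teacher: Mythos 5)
Your proposal is correct and follows essentially the same route as the paper: both identify $A_n$ as the $n$-fold Kronecker power of $B_n = J - I$, read off the eigenvalues from multiplicativity over tensor factors, and obtain the Smith normal form by tensoring the bases that put $B_n$ into the form $\operatorname{diag}(1,\dots,1,n-1)$. Your explicit remark that the argument succeeds only because every invariant factor of $J - I$ is a power of the single integer $n-1$ (so the sorted diagonal entries of $D^{\otimes n}$ automatically form a divisibility chain) is a point the paper leaves implicit, and is worth making.
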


\begin{proof}
Let $V=\RRR^{\TT_n}$, the vector space over the reals consisting of all formal linear combinations of elements of $\TT_n$. 
Then $A_{n}$ represents the linear map $\psi : V \to V$ defined by
\begin{equation} \label{A}
 \psi(\left\{a_{i_1}, b_{i_2}, \cdots , c_{i_n}\right\}) = \sum_{a_{j_1} \neq a_{i_1}, \cdots , c_{j_n} \neq c_{i_n}} \left\{a_{j_1}, b_{j_2}, \cdots , c_{j_n}\right\},
\end{equation}
for $\left\{a_{i_1}, b_{i_2}, \cdots , c_{i_n}\right\}\in\TT_n$.
That is, $\psi$ is the linear map defined by sending a transversal to the sum of the transversals disjoint to it.

Next, let $[n]$ denote the set $\left\{ 1, 2, \cdot\cdot\cdot, n \right\}$ and define
$W=\RRR^{[n]}$, the vector space over the reals consisting of all formal linear combinations of elements of $[n]$.
If $B_n$ is the $n\times n$ matrix whose diagonal entries are zero and all other entries are one,
then $B_n$ represents the linear map $\theta : W \to W$ defined by
\begin{equation} \label{B}
\theta(i) = \sum_{j \in [n],  j \neq i} j,
\end{equation}
for $i\in [n]$.
That is, $\theta$ sends an element in $[n]$ to the (formal) sum of all the elements of $[n]$ distinct from it.
It is easy to see that $n-1$ is an eigenvalue of $B_n$ with multiplicity one, 
and $-1$ is an eigenvalue with multiplicity $n-1$ (eigenvectors will have entries that sum to zero).  

Now, in order to see the connection between (\ref{A}) and (\ref{B}), let us 
take the tensor product of $W$ with itself $n$ times and consider the induced linear map
$\theta^{\otimes n} : W^{\otimes n} \to W^{\otimes n}$ defined by 
\begin{align*}
\theta^{\otimes n}(i_{1} \otimes \cdots \otimes i_{n}) &=  \theta(i_{1}) \otimes \cdots \otimes \theta(i_{n}).
\end{align*}
Since the tensor product respects the distributive law over vector addition, we have that
\begin{equation} \label{A_B}
\begin{aligned}
\theta^{\otimes n}(i_{1} \otimes \cdots \otimes i_{n}) & =  \theta(i_{1}) \otimes \cdots \otimes \theta(i_{n}), \\
& =  (\sum_{j_{1} \neq i_{1}}j_{1}) \otimes \cdots \otimes (\sum_{j_{n} \neq i_{n}}j_{n}), \\
& =  \sum_{j_{1} \neq i_{1}, \cdots, j_{n} \neq i_{n}} j_{1} \otimes \cdots \otimes j_{n}.
\end{aligned}
\end{equation}
It is easy to see that the vector spaces $V$ and $W^{\otimes n}$ are isomorphic under the map that sends a transversal $\left\{ a_{i_1}, b_{i_2}, \cdots , c_{i_{n}} \right\}$ to the simple tensor $i_1 \otimes i_2 \otimes \cdots \otimes i_{n}$, and after identifying these spaces the maps (\ref{A}) and (\ref{A_B}) are the same.  
In other words, $A_n$ can be obtained by taking the Kronecker product of $B_n$ with itself $n$ times.
Therefore, part (\ref{item1}) of the theorem is proved if we show that the eigenvalues of $\theta^{\otimes n}$ are 
\[
(-1)^{n-k} (n-1)^{k}
\]
occurring with multiplicity 
\[
\binom{n}{k} (n-1)^{n-k}
\]
for $0 \le k \le n$.
To see this, take a basis of eigenvectors $\{x_{i}\}_{i=1}^{n}$ for the map $\theta : W \to W$ in (\ref{B}).  Let $x_1$ have eigenvalue $n-1$ and let $x_2, x_3, \cdots , x_n$ each have eigenvalue $-1$.  Let $x_{i_1} \otimes \cdots \otimes x_{i_{n}}$ be an arbitrary basis vector for $W^{\otimes n}$.  Then it is easy to see that this basis vector is in fact an eigenvector for $\theta^{\otimes n}$ and both the eigenvalue and multiplicity depend on how many times $x_{1}$ occurs in this simple tensor.  Explicity, if $x_{1}$ occurs exactly $k$ times in the simple tensor $x_{i_1}\otimes \cdots \otimes x_{i_{n}}$ then 
\begin{eqnarray*}
\theta^{\otimes n}(x_{i_1} \otimes \cdots \otimes x_{i_{n}}) &=& \theta(x_{i_1}) \otimes \cdots \otimes \theta(x_{i_{n}}) \\
& = & (-1)^{n-k} (n-1)^{k} x_{i_1}\otimes \cdots \otimes x_{i_{n}},
\end{eqnarray*}
so $(-1)^{n-k} (n-1)^{k}$ is an eigenvalue of $\theta^{\otimes n}$.  Additionally, the number of basis vectors containing a factor of $x_{1}$ exactly $k$ times is $\binom{n}{k} (n-1)^{n-k}$. This completes the proof of part (\ref{item1}). 

Next, let us prove part (\ref{item2}) of the theorem.  It is amazing that a nearly identical argument goes through to give the invariant factors of $A_{n}$.  Set $V = \mathbb{Z}^{\mathcal{T}_{n}}$ and $W = \mathbb{Z}^{[n]}$, the free modules over the integers consisting of all formal linear combinations of elements of $\TT_n$ and $[n]$ respectively. The maps (\ref{A}) and (\ref{B}) should now be viewed as homomorphisms of free abelian groups. It is an easy exercise using integral row and column operations to show that $B_n$ has Smith normal form:
\[ S=
\begin{bmatrix}
1 & & & & \\
& 1 & & & \\
& & \ddots & & \\
& & & 1 & \\
& & & & n-1 
\end{bmatrix}.
\]
Therefore, $1$ is an invariant factor with multiplicity $n-1$ and $n-1$ is an invariant factor with multiplicity $1$. This means that we can find bases $\{x_{i}\}_{i=1}^{n}$ and $\{y_{i}\}_{i=1}^{n}$ of $W$ so that 
\begin{align*}
\theta(x_1) &= (n-1)y_1 \\
\theta(x_{i}) &= y_{i}, 
\end{align*}
for $2 \le i \le n$.  The matrix $A_{n}$ still represents the homomorphism $\theta^{\otimes n} : W^{\otimes n} \to W^{\otimes n}$, and the sets $\{x_{i_1} \otimes \cdots \otimes x_{i_{n}}\}$ and $\{y_{i_1} \otimes \cdots \otimes y_{i_{n}}\}$ are bases of $W^{\otimes n}$.  If $x_1$ occurs exactly $k$ times in the basis vector $x_{i_1} \otimes \cdots \otimes x_{i_{n}}$, then we have
\begin{align*}
\theta^{\otimes n}(x_{i_1} \otimes \cdots \otimes x_{i_{n}}) &= \theta(x_{i_1}) \otimes \cdots \otimes \theta(x_{i_{n}}), \\
& =  (n-1)^{k} y_{i_1}\otimes \cdots \otimes y_{i_{n}}.
\end{align*}
Hence, the invariant factors of $A_{n}$ are
\[
(n-1)^{k}
\]
occurring with multiplicity
\[
\binom{n}{k} (n-1)^{n-k}
\]
for $0 \le k \le n$, and this completes the proof.
\end{proof}

\section{Relationship to Rota's Basis Conjecture}
The following conjecture was made by Gian-Carlo Rota in 1989 \cite{HR:1994}.

Suppose one has $n$ bases $\BB_{1}, \BB_{2}, \cdots , \BB_{n}$ of an $n$-dimensional vector space $\VV$.  Form an $n\times n$ array, with the elements of $\BB_{i}$ forming the $i$-th row.  Then Rota's basis conjecture states that there is a way to independently permute the entries of each row so that all the columns of the array are also bases of $\VV$.

This conjecture is very general, stated for any finite dimensional vector space over any field.  The basis conjecture is implied for even dimensions and fields of specific characteristics (in particular, characteristic zero) by the Alon-Tarsi conjecture concerning even and odd Latin squares \cite{AT:1992, Onn:1997}.  Indeed, much of the recent progress on Rota's basis conjecture has been through investigations of Latin squares \cite{Drisko:1997, Glynn:2010}.  The conjecture also generalizes to an identical statement about bases in a rank $n$ matroid that has been fully settled only for $n \leq 3$ \cite{Chan:1995}.

If we assign the elements of the $i$-th basis $\BB_{i}$ to the positions in the $i$-th row of the array (\ref{bases}), then Rota's basis conjecture asserts that there will exist $n$ mutually disjoint transversals where each transversal corresponds to a basis of $\VV$.  Now, which transversals will correspond to bases will depend on the original choice of $\BB_{1}, \BB_{2}, \cdots , \BB_{n}$, and each such choice will distinguish a principal submatrix of $A_{n}$.  Thus in this sense the matrix $A_{n}$ contains the information needed to verify any particular instance of Rota's basis conjecture.  It is hoped that the matrix $A_{n}$ may offer a new approach this problem.

\section{Acknowledgements}
The authors would like to thank Peter Sin for helpful discussions.  The authors also acknowledge the helpful computations produced by Michael Cheung.  Discussion of this work originated from an REU (Research Experience for Undergraduates) project at James Madison University in June 2012.  This work was supported in part by the NSF under DMS-1004516.

\bibliographystyle{siam}

\end{document}